\newtheorem{proposition}{Proposition}
\newtheorem{definition}{Definition}
\newtheorem{theorem}{Theorem}
\newtheorem{lemma}{Lemma}
\newtheorem{remark}{Remark}
\DeclareMathOperator{\tr}{Tr}
\title[]{Nonuniform contractions and density stability results via a smooth topological equivalence}
\author[Casta\~neda]{\'Alvaro casta\~neda}
\author[Monz\'on]{Pablo Monz\'on}
\author[Robledo]{Gonzalo Robledo}
\address{Universidad de Chile, Departamento de Matem\'aticas. Casilla 653, Santiago, Chile}
\address{Universidad de la Rep\'ublica. Facultad de Ingenier\'ia, C\'odigo Postal 11300, Montevideo,  Uruguay}
\email{castaneda@uchile.cl, grobledo@uchile.cl, monzon@fing.edu.uy}
\subjclass[2010]{34D09, 37C60, 37B25}
\keywords{Hartman--Grobman Theorem, Nonautonomous Differential Equations, Nonautonomous Hyperbolicity, Nonuniform Asymptotic Stability, Preserving Orientation Diffeomorphism}
\thanks{This research has been partially supported by FONDECYT Regular 1170968}
\date{\today}
\begin{document}

\begin{abstract}
We study the smoothness and preserving orientation properties of a global and nonautonomous version of the Hartman--Grobman Theorem  when the linear system has a nonuniform contraction on the half line. The nonuniform contraction implies the existence of a density function (\emph{i.e} a dual type of Lyapunov function) for the linear system which combined with the above diffeomorphism allow us to construct a density function for the nonlinear system.
\end{abstract}

\maketitle

\section{Introduction}

The Hartman--Grobman Theorem \cite[Th. I]{Hartman-0} states that an autonomous nonlinear   system admits a local linearization around any hyperbolic equilibrium through a local homeomorphism. This result has been developed in several ways and we will focus our interest in a special case of the nonautonomous framework, where a global homeomorphism can be constructed. We are also interested in the smoothness and preserving orientation properties of the above linearization.

\subsection{Nonautonomous linearization}
In \cite{Pugh}, C. Pugh studied an example of global linearization
by considering a family of linear autonomous systems with bounded and Lipschitz nonlinear perturbations, obtaining an explicit homeomorphism.
The extension of the Pugh's global linearization result to a nonautonomous framework was carried out by K.J. Palmer \cite{Palmer73}, which firstly considered the linear system
\begin{equation}
\label{lin}
\dot{x}=A(t)x,
\end{equation}
and a family of perturbations
\begin{equation}
\label{nolin}
\dot{y}=A(t)y+f(t,y),
\end{equation}
where $A \colon \mathbb{R} \to M(n, \mathbb{R})$ is continuous and bounded while $f: \mathbb{R} \times \mathbb{R}^n \to \mathbb{R}^n$ has properties as in \cite{Pugh}. Secondly, in order to emulate the hyperbolicity condition it was assumed that (\ref{lin}) has a dichotomy property.

\begin{definition}
\label{dicotomia}
The system (\ref{lin}) has a dichotomy on the interval $J\subseteq \mathbb{R}$ if there exists an invariant projector $P(\cdot)$, a function  $D\colon [0,+\infty[\to ]0,+\infty[$ and an increasing function $\mu\colon [0,+\infty[\to [1,+\infty[$
with $\mu(0)=1$, $\mu(t)\to +\infty$ as $t\to +\infty$, such that the fundamental matrix of (\ref{lin}), namely  $X(t)$, verifies
    \begin{displaymath}
    \left\{\begin{array}{rcl}
    ||X(t,s)P(s)||\leq  D(s)\left(\frac{\mu(t)}{\mu(s)}\right)^{-\alpha} &\textnormal{if} & t\geq s, \quad t,s\in J\\\\
    ||X(t,s)[I-P(s)]||\leq  D(s)\left(\frac{\mu(t)}{\mu(s)}\right)^{\alpha} &\textnormal{if} & s\geq t, \quad t,s\in J,
    \end{array}\right.
    \end{displaymath}
with $\alpha>0$.
\end{definition}

We point out that the invariance property in the previous Definition means that
\begin{displaymath}
P(t)X(t,s)=X(t,s)P(s) \quad \textnormal{for any $t\geq s, \quad t,s\in J$}.
\end{displaymath}

In order to mimic the homeomorphism of the autonomous case, Palmer introduced
the concept of topological equivalence as follows
\begin{definition}
\label{TopEq}
The systems \textnormal{(\ref{lin})} and \textnormal{(\ref{nolin})} are topologically equivalent
on $J$ if there exists a function $H\colon J \times \mathbb{R}^{n}\to \mathbb{R}^{n}$ with the properties
\begin{itemize}
\item[(i)] If $x(t)$ is a solution of \textnormal{(\ref{lin})}, then $H[t,x(t)]$ is a solution
of \textnormal{(\ref{nolin})},
\item[(ii)] $H(t,u)-u$ is bounded in $J \times \mathbb{R}^{n}$,

\item[(iii)]  For each fixed $t\in J$, $u\mapsto H(t,u)$ is an homeomorphism of $\mathbb{R}^{n}$.
\end{itemize}
In addition, the function $G(t,u)=H^{-1}(t,u)$ has properties \textnormal{(ii)--(iii)} and maps solutions of \textnormal{(\ref{nolin})} into solutions of \textnormal{(\ref{lin})}.
\end{definition}

The seminal work of Palmer obtained sufficient conditions of topological equivalence between (\ref{lin}) and (\ref{nolin}) under the following assumptions: a) the linear system has an exponential dichotomy on $\mathbb{R}$, namely, when $D(s)=K$,
$\mu(t)=e^{t}$, $J=\mathbb{R}$ and a constant projector in the linear part, b) the nonlinear perturbation is uniformly bounded and Lipschitz continuous with respect to $x$, and c) a smallness threshold for the Lipschitz constant. Moreover, the global homeomorphism $H(t,\cdot)$ is constructed in terms of the exponential dichotomy. Later, there has been a plethora of extensions allowing less restrictive assumptions \cite{BV-2006,BV-NA,Bento, Jiang06,Reinfelds, Zhang}.

In addition, J. Shi and K. Xiong \cite{Shi} noticed that in several cases the property (iii) of the topological equivalence can be improved
obtaining that the homeomorphism $u\mapsto H(t,u)$
is uniformly continuous and/or H\"older continuous with respect to the variable $u$.

With respect to the smoothness properties of the homeomorphisms
$u\mapsto H(t,u)$ above mentioned
there are few results in comparison with the local and autonomous linearization literature. A first step in a nonautonomous framework is to consider a contraction in the linear system (\ref{lin}) such as
Hartman in \cite{Hartman60}, which has been done in \cite{CR1}, where it is assumed that the linear system has an exponential dichotomy on $\mathbb{R}$ with projector $P=I$, and a $C^{1}$ global linearization is obtained provided stronger assumptions on the nonlinearities. This result is improved in \cite{CMR} where the exponential dichotomy is only considered in $\mathbb{R}^{+}$, this restriction allows a considerably simplification on the technical assumptions of the nonlinearities, moreover it is possible to show that the homeomorphism is $C^{r}$ with $r\geq 1$ in a simpler way.

It is worth to stress that also there is a recent smoothness local result \cite{Cuong}, where contractions and expansions are involved. This approach generalizes the Sternberg theorem to the nonautonomous case and the non--resonance conditions are deduced in terms of the exponential dichotomy spectrum   (see \cite{SS,Kloeden} for details about this spectrum).

\subsection{Density functions and smooth topological equivalence}
Let us consider the nonlinear system
\begin{equation}
\label{generico}
z'=g(t,z) \quad \textnormal{with} \quad g(t,0)=0 \quad \textnormal{for any $t\in \mathbb{R}$},
\end{equation}
where $g\colon \mathbb{R}\times \mathbb{R}^{n}\to \mathbb{R}^{n}$ is such that the existence, uniqueness
and unbounded continuation of the solutions is verified.

%

In 2001, A. Rantzer \cite{Rantzer} introduced a dual concept for Lyapunov functions called density functions in an autonomous context. In order to make the article self contained, we recall the extension  stated in \cite{Schlanbusch}.

\begin{definition}
\label{density}
A density function of \textnormal{(\ref{generico})} is a function
$\rho\colon \mathbb{R}\times (\mathbb{R}^{n}\setminus \{0\})\to [0,+\infty)$ of class $C^{1}$, integrable outside any ball centered at the origin for any fixed $t\geq 0$, which satisfies
\begin{displaymath}
\frac{\partial \rho(t,z)}{\partial t}+\triangledown \cdot [\rho(t,z)g(t,z)]>0
\end{displaymath}
almost everywhere with respect to $\mathbb{R}^{n}$ and for every $t\in \mathbb{R}$, where
$$
\triangledown \cdot [\rho g] = \triangledown \rho \cdot g + \rho[\triangledown\cdot g],
$$
and $\triangledown \rho$, $\triangledown \cdot g$ denote respectively the gradient of $\rho$ and divergence
of $g$.
\end{definition}

The density functions were introduced in an autonomous framework in order to obtain sufficient conditions
for almost global asymptotic stability; we refer
to \cite{Angeli}, \cite{CG1} and \cite{Vasconcelos} for a deeper discussion
and applications. We highlight the contribution of G. Meinsma in \cite{Meinsma}, which provides a nice interpretation of this result in terms of the continuity equation from fluid mechanics.

In the above references it was proved that the existence of a density function combined with technical conditions in an autonomous context imply the almost global stability. The extension of this result to the nonautonomous case has been stated in \cite{Monzon}, \cite{Schlanbusch}:

\begin{proposition}[Theorem 4, \cite{Schlanbusch}]
Consider the system \textnormal{(\ref{generico})} such that $z=0$ is a locally stable equilibrium point.
If there exists a density function associated to \textnormal{(\ref{generico})} and is uniformly integrable in $t$ over $\{x\in \mathbb{R}^{n}\colon ||x||>\varepsilon\}$ for any $\varepsilon>0$, then for every initial time $t_{0}$, the sets
of points that are not asymptotically attracted by the origin has zero Lebesgue measure.
\end{proposition}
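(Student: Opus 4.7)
The plan is to adapt Rantzer's divergence-based argument to the nonautonomous setting and proceed by contradiction. I would assume that the set $N(t_0) \subset \mathbb{R}^n$ of initial conditions whose forward trajectory of \textnormal{(\ref{generico})} is not asymptotically attracted to the origin has positive Lebesgue measure, and then seek a contradiction with the density inequality and the uniform integrability hypothesis.

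First I would exploit the (uniform) local stability of $z=0$ to confine the non-attracted trajectories to an invariant shell. For each integer $m \geq 1$ let $\delta_m > 0$ be a stability threshold for $\varepsilon = 1/m$: any solution entering the ball of radius $\delta_m$ remains in the ball of radius $1/m$ thereafter. If $\limsup_{t \to \infty}\|\phi(t, t_0, x)\| > 1/m$, the trajectory can never enter the ball of radius $\delta_m$, so
\[
N_m := \bigl\{ x \in N(t_0) : \|\phi(t, t_0, x)\| \geq \delta_m \text{ for all } t \geq t_0 \bigr\}
\]
decomposes $N(t_0) = \bigcup_m N_m$, and I would pick $m$ with $|N_m| > 0$ together with a compact subset $Z \subset N_m$ satisfying $0 < |Z| < \infty$. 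Setting $Z_t := \phi(t, t_0, Z)$, forward invariance gives $Z_t \subset \{\|y\| \geq \delta_m\}$ for all $t \geq t_0$.

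Second I would apply the Liouville identity. With $J(t, x) = |\det D_x \phi(t, t_0, x)|$ solving $\partial_t J = (\nabla \cdot g)(t, \phi) J$, the chain rule combined with the change of variable $y = \phi(t, t_0, x)$ yields
\[
\frac{d}{dt} \int_{Z_t} \rho(t, y) \, dy = \int_{Z_t} \left[ \frac{\partial \rho}{\partial t} + \nabla \cdot (\rho g) \right](t, y) \, dy,
\]
and the density inequality of Definition \ref{density} makes this quantity strictly positive for every $t$. On the other hand, the inclusion $Z_t \subset \{\|y\| \geq \delta_m\}$ together with the uniform integrability hypothesis bounds the $\rho$-mass of $Z_t$ by a constant $M_m$ independent of $t$. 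The strictly increasing, uniformly bounded function $t \mapsto \int_{Z_t} \rho(t, y)\, dy$ then has a finite limit, so
\[
\int_{t_0}^{\infty} \int_{Z_t} \left[ \frac{\partial \rho}{\partial t} + \nabla \cdot (\rho g) \right](t, y) \, dy \, dt \leq M_m - \int_Z \rho(t_0, y)\, dy < \infty.
\]

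The delicate final step, and the main obstacle, is to contradict the a.e.\ strict positivity of $\partial_t \rho + \nabla \cdot (\rho g)$ from this bounded total dissipation. Using Fubini and pulling back along the flow, for almost every $x \in Z$ the time-integral $\int_{t_0}^\infty [\partial_t \rho + \nabla \cdot (\rho g)](t, \phi(t, t_0, x))\, J(t, x)\, dt$ is finite; since its integrand is pointwise positive, the map $t \mapsto \rho(t, \phi(t, t_0, x)) J(t, x)$ converges monotonically to a finite limit. The hard part is to rule out this scenario for a positive-measure set of trajectories confined to the shell $\{\|y\| \geq \delta_m\}$: I would restrict $Z$ further by a Lusin-type selection so that $J(t, x)$ admits a positive lower bound on a compact time-space annulus, observe that on this annulus $\partial_t \rho + \nabla \cdot (\rho g)$ is bounded below by a positive constant (by strict positivity plus the density being $C^1$), and then conclude via a recurrence-type argument that almost every such trajectory spends infinite time in the annulus, forcing the time-integral to diverge. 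This is where the qualifier \emph{uniformly} integrable in $t$ (rather than integrable for each $t$ separately) is essential, since it converts the fiberwise $L^1$ control on $\rho$ into the uniform ceiling $M_m$ that the strictly increasing $\rho$-mass must ultimately violate.
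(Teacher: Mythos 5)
First, a point of reference: the paper does not prove this proposition --- it is quoted verbatim from \cite{Schlanbusch} (Theorem 4) as background, so there is no in-paper proof to compare against. Judged on its own terms, your strategy is the standard Rantzer--Monz\'on route, and your first two steps are sound: the decomposition $N(t_0)=\bigcup_m N_m$ via the (uniform) stability margins $\delta_m$, the nonautonomous Liouville identity $\frac{d}{dt}\int_{Z_t}\rho(t,y)\,dy=\int_{Z_t}\bigl[\partial_t\rho+\triangledown\cdot(\rho g)\bigr](t,y)\,dy$, and the ceiling $\int_{Z_t}\rho(t,y)\,dy\le M_m$ coming from $Z_t\subset\{\|y\|\ge\delta_m\}$ and the uniform-in-$t$ integrability are all correct and are exactly what the cited proofs use.

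The genuine gap is in your final step, and it is not merely a technicality. (a) You claim that on a compact time--space annulus $\partial_t\rho+\triangledown\cdot(\rho g)$ is ``bounded below by a positive constant (by strict positivity plus the density being $C^1$)''. This is false: the inequality in Definition \ref{density} holds only \emph{almost everywhere}, and a continuous function that is positive a.e.\ can still vanish on a nonempty null set (e.g.\ $y\mapsto\mathrm{dist}(y,S)^2$ for a sphere $S$ inside the annulus), so its infimum on a compact set can be $0$; moreover, even pointwise positivity would not give a bound uniform over the unbounded time interval $[t_0,\infty)$. (b) The assertion that ``almost every such trajectory spends infinite time in the annulus'' is unsupported: membership in $N_m$ only confines the trajectory to $\{\|y\|\ge\delta_m\}$, and nothing in the hypotheses prevents non-attracted trajectories from drifting to infinity and leaving every compact annulus permanently. (c) A Lusin-type selection cannot produce a lower bound on $J(t,x)$ that is uniform in $t\in[t_0,\infty)$; the flow may contract volume along every trajectory ($J(t,x)\to0$), in which case both $\rho(t,\phi)J$ and its derivative can stay small without violating anything you have established. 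The net effect is that ``$t\mapsto\int_{Z_t}\rho(t,y)\,dy$ is strictly increasing and bounded'' is not by itself a contradiction --- such functions exist in abundance --- and the additional quantitative input that closes this loop in the literature (in Rantzer's autonomous theorem, the integrability of $\rho f/|x|$ near infinity and a flux/divergence-theorem argument; in the time-varying versions, its analogue) is absent from your argument. As written, the proof does not close.
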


In order to relate topological equivalence with density functions, let us recall that the smooth global linearization results from \cite{CR1,CMR} assumed that the linear system (\ref{lin}) is contractive. Under the assumption that the linear system is uniformly asymptotically stable, P. Monz\'on \cite{Monzon} proved that there exists a density function $\rho(t,x)$ for the linear system. Later, \'A. Casta\~neda and G. Robledo \cite{CR1} showed that by combining the existence of $\rho(t,x)$ and the preserving orientation diffeomorphism, it is possible to construct a density function for any nonlinear system (\ref{nolin}) that satisfies the assumptions of the Palmer's work \cite{Palmer73}.

\subsection{Structure and novelty of the article}
The section 2 states and comment the properties of (\ref{lin}) and (\ref{nolin}) considered in this work.
The main result of the section 3 states that there exists a topological equivalence between (\ref{lin})
and (\ref{nolin}) which has a class of smoothness $C^{r}$ with $r\geq 1$. This result extends our work \cite[Theorems 1 and 3]{CMR} in terms of that the asymptotical stability of the linear system is more general than the uniform one and that considers a broader family of nonlinearities.

The section 4 shows some consequences of the existence of the above topological equivalence and is focused in some properties between the equilibria of (\ref{lin}) and (\ref{nolin}).

The section 5 states the main results of this work. Firstly, we extend the Monz\'on's \cite{Monzon} converse result by obtaining a density function for linear systems with nonuniform asymptotical stabilities. Secondly, we prove that any density function for the system (\ref{lin}) allows to construct a density function for the nonlinear system \ref{nolin} via the the preserving orientation diffeomorphism established in section 2. This result generalizes the work \cite[Theorem 3]{CR1}.

\section{Preliminaries}
In the first part of this section, we establish and comment the properties of the systems (\ref{lin}) and (\ref{nolin}), which are considered in topological equivalence literature. In particular, the linear system is contractive or asymptotically stable in a more general way than the uniform one.

In the second part, we recall  recent converse nonuniform stability result in terms of a quadratic Lyapunov type function and the contractivity conditions above stated.

\subsection{Basic assumptions}Let us recall that
$X(t)$ is any fundamental matrix of (\ref{lin}) and its corresponding transition matrix is $X(t,s)=X(t)X^{-1}(s)$. Now, we will assume that (\ref{lin}) and (\ref{nolin}) satisfy the following properties:
\begin{itemize}
      \item [{\bf{(P1)}}] The system (\ref{lin}) admits a nonuniform $(D,\mu)$--contraction, that is there exists a constant $\alpha>0$ such that
\begin{equation}
\label{Dm-con}
||X(t,s)||\leq D(s)\left(\frac{\mu(t)}{\mu(s)}\right)^{-\alpha} \quad \textnormal{with $t\geq s \geq 0$},
\end{equation}
where $D\colon [0,+\infty[\to ]0,+\infty[$ and $\mu\colon [0,+\infty[\to [1,+\infty[$ is a $C^{1}$ increasing function such that
$$
\mu(0)=1 \quad \textnormal{and} \quad \lim\limits_{t\to +\infty}\mu(t)=+\infty.
$$

    \item [{\bf{(P2)}}] There exists two functions $\beta,\gamma \colon [0,+\infty[\to ]0,+\infty[$ such that for all $t \geq 0$ and any couple $(y, \bar{y}) \in \mathbb{R}^n \times \mathbb{R}^n$ it follows that
\begin{equation}
\label{perturbacion}
 \mid f(t, y)\mid \leq \beta(t)
 \quad \textnormal{and} \quad
 \mid f(t,y) - f(t,\bar{y}) \mid \leq \gamma(t) \mid y - \bar{y}|.
\end{equation}

\item[{\bf{(P3)}}] The function $\beta\colon [0,+\infty[\to [0,+\infty[$ defined above verifies
\begin{displaymath}
\int_{0}^{t}D(s)\left(\frac{\mu(t)}{\mu(s)}\right)^{-\alpha}\beta(s)\,ds:=p <+\infty.
\end{displaymath}

\item[{\bf{(P4)}}] The function $\gamma\colon [0,+\infty[\to [0,+\infty[$ defined above verifies
\begin{displaymath}
\int_{0}^{t}D(s)\left(\frac{\mu(t)}{\mu(s)}\right)^{-\alpha}\gamma(s)\,ds:=q<1.
\end{displaymath}
\end{itemize}

We will make some comments about the above properties:
\begin{itemize}
    \item[i)] The assumption \textbf{(P1)} was previously considered in the work of \cite{Liao} and implies that (\ref{lin}) is asymptotically stable system. Nevertheless, we added a smoothness condition for $\mu(\cdot)$ for technical reasons.
    \item[ii)] The assumption \textbf{(P1)} can be seen as a particular --but distinguished-- case of dichotomy (see Definition \ref{dicotomia}) with projector $P(\cdot)=I$, which can be seen from a geometrical point of view as the stable direction of the system.
\end{itemize}
\medskip

The property of dichotomy restricted to the particular case $P(s)=I$ allows to characterize several types of asymptotic stability. It is well known (see e.g \cite[Theorem 4.11]{Khalil}) that (\ref{lin})  is uniform asymptotically stable if and only if admits a exponential dichotomy on $[t_0, +\infty).$  In the following table we show another type of asymptotic stabilities more general than uniform one which are described in terms of $(D, \mu)-$contractions.

\medskip

\begin{tabular}{|c|c|c|c|}
\hline
\bf{Asymptotic Stability}  & \bf{$D(s)$} & \bf{$\mu(t)$} & \bf{References}\\
 \hline
Uniform &  $K$ &  $e^{t}$  & \cite{Coppel}\\
\hline
Generalized Exponential &  $K$ &  $\exp\left(\int_{0}^{t}a(\tau)\,d\tau\right)$ with $a(\tau)\geq 0$  & \cite{Martin}\\
\hline
$\mu$--stability   & $K$ &  continuous & \cite{Naulin} \\
\hline
Nonuniform   & $Ke^{\varepsilon s}$ & $e^{t}$ &   \cite{BV-CMP} \\
\hline
Generalized Nonuniform  &  $K\nu(s)^{\varepsilon}$ & continuous & \cite{Bento}\\
\hline
\end{tabular}
\medskip

\begin{itemize}
    \item[iii)] The property \textbf{(P2)} has been considered by L. Jiang \cite{Jiang06} and A. Reinfelds \emph{et. al} \cite{Reinfelds} in order to construct a topological equivalence between (\ref{lin}) and (\ref{nolin}), when the linear system has dichotomies more general than the exponential one. Moreover, the particular case when $\beta(s)$ and $\gamma(s)$ are constants, was previously considered in the topological equivalence Palmer's work \cite{Palmer73}.
    \item[iv)] The properties \textbf{(P3)--(P4)} are introduced by technical reasons as in \cite{Jiang06} and \cite{Reinfelds}. In addition, it is worth to emphasize that \textbf{(P4)} implies that if (\ref{nolin}) has an equilibrium then it is unique as it was proved in \cite{CMR}.
\end{itemize}

\subsection{Stability and converse results}
The well knowns Lyapunov stability (see \cite[Example 4.21]{Khalil}) and Lyapunov's converse (see \cite[Th. 4.12]{Khalil}) theorems state that the linear system (\ref{lin}) is uniformly asymptotically stable if and only if there exists a quadratic Lyapunov function
$$
V(t,x)=x^{*}S(t)x,
$$
where $S(t)$ a $C^{1}$, bounded, positive definite symmetric matrix that satisfies the Lyapunov's equation:
\begin{displaymath}
\dot{S}(t)+A^{*}(t)S(t)+S(t)A(t)=-Q(t),
\end{displaymath}
where $Q(t)$ is continuous, symmetric and positive definite matrix.

The above results have been extended in several ways for asymptotical stabilities more general than the uniform one. In particular, we will be focused in the work of F. Liao \emph{et. al}, which proves that:
\begin{proposition}[Th.2 \cite{Liao}]
\label{2-equiv}
If there exist a $C^{1}$ positive--definite matrix $S(t)$ for any $t\geq 0$ and positive constants $C$ and $K$ such that
\begin{equation}
\label{CotaS}
||S(t)||\leq CD^{2}(t),
\end{equation}
\begin{equation}
\label{Lyap-ec}
\dot{S}(t)+A^{*}(t)S(t)+S(t)A(t)\leq -\left( I+KS(t)\right)\frac{\mu'(t)}{\mu(t)},
\end{equation}
then the system (\ref{lin}) admits a nonuniform $(D,\mu)$--contraction.

Conversely if (\ref{lin}) admits a nonuniform $(D,\mu)$--contraction and moreover there exist constants $c>0$ and $d\geq 1$ such that
\begin{equation}
\label{restriction}
||X(t,s)||\leq c \quad \textnormal{whenever $\mu(t)\leq d\mu(s)$ with $0\leq s\leq t$},  \end{equation}
then there is a $C^{1}$ positive--definite matrix satisfying (\ref{CotaS}) and (\ref{Lyap-ec}).
\end{proposition}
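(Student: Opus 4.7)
The plan is to establish the two implications independently, using the quadratic form $V(t,x)=x^{*}S(t)x$ as the central object. For the \emph{forward} direction, I would evaluate $V$ along a solution $x(t)$ of \eqref{lin} and use \eqref{Lyap-ec} to obtain
\[
\dot V(t)\leq -|x(t)|^{2}\frac{\mu'(t)}{\mu(t)}-KV(t)\frac{\mu'(t)}{\mu(t)}.
\]
The integrating factor $\mu(t)^{K}$ converts this into $\frac{d}{dt}[\mu(t)^{K}V(t)]\leq -|x(t)|^{2}\mu(t)^{K-1}\mu'(t)\leq 0$, so $\mu(t)^{K}V(t)$ is nonincreasing. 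Combined with $V(s)\leq\|S(s)\|\,|x(s)|^{2}\leq CD^{2}(s)|x(s)|^{2}$ from \eqref{CotaS}, this yields both the decay $V(t)\leq CD^{2}(s)|x(s)|^{2}(\mu(s)/\mu(t))^{K}$ and the integral control $\int_{s}^{t}\mu^{K-1}\mu'|x|^{2}\,d\tau\leq \mu(s)^{K}CD^{2}(s)|x(s)|^{2}$. Standard manipulations exploiting the short-time continuity of $|x|^{2}$ to pass from the integral bound to a pointwise estimate then deliver a bound of the form $|x(t)|\leq\widetilde{C}D(s)(\mu(t)/\mu(s))^{-\alpha}|x(s)|$ for a positive constant $\alpha$ determined by $K$, which is exactly a nonuniform $(D,\mu)$-contraction.

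For the \emph{converse} direction, I would construct $S(t)$ via the integral ansatz
\[
S(t)=\int_{t}^{\infty} X^{*}(\tau,t)\,Q(\tau)\,X(\tau,t)\,d\tau,
\]
where $Q(\tau)>0$ is a weight to be selected. Differentiating under the integral and using $\partial_{t}X(\tau,t)=-X(\tau,t)A(t)$ immediately gives the identity $\dot S+A^{*}S+SA=-Q(t)$, so \eqref{Lyap-ec} becomes the pointwise algebraic requirement $Q(t)\geq(I+KS(t))\mu'(t)/\mu(t)$. Convergence of the integral and the bound \eqref{CotaS} follow from \eqref{Dm-con}: the substitution $u=\mu(\tau)$ turns $\int_{t}^{\infty}\|X(\tau,t)\|^{2}\mu'(\tau)/\mu(\tau)\,d\tau$ into a finite multiple of $D^{2}(t)$, independent of $t$. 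Positivity of $S$ is automatic since $X(\tau,t)$ is invertible and $Q>0$, and $S\in C^{1}$ follows from dominated convergence applied to the $t$-derivative of the integrand.

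The main obstacle will be handling the near-diagonal regime $\mu(\tau)/\mu(t)\approx 1$ of the integral defining $S(t)$: there the contraction \eqref{Dm-con} gives only $\|X(\tau,t)\|\leq D(t)$, which can be large and is not uniform in $t$. This is precisely why the auxiliary hypothesis \eqref{restriction} is imposed. The delicate step will be to split the integral at $\mu(\tau)=d\,\mu(t)$: on the near piece, \eqref{restriction} produces the $I$-term of \eqref{Lyap-ec} with a $t$-independent constant, while on the far piece, \eqref{Dm-con} produces the $KS$-term. Verifying that these two contributions close the Lyapunov inequality with a single universal $K$, and that all implicit constants combine consistently with $\|S(t)\|\leq CD^{2}(t)$, will be the most technical part of the argument.
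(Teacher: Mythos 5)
Be aware that the paper itself contains no proof of this proposition: it is quoted as Theorem 2 of \cite{Liao} and used as a black box in Section 5, so your attempt can only be measured against the standard argument for results of this type --- which is indeed the architecture you chose (a quadratic form $V=x^{*}S(t)x$ for the direct part, an integral ansatz for the converse). Within that architecture, two steps would not close as written. In the forward direction, the monotonicity of $\mu(t)^{K}V(t)$ and the bound $V(t)\le CD^{2}(s)|x(s)|^{2}(\mu(t)/\mu(s))^{-K}$ are correct, but the decisive step --- converting decay of $V$ into decay of $|x|$ --- is not a ``standard manipulation''. The hypotheses give only pointwise positive definiteness of $S(t)$ together with the \emph{upper} bound (\ref{CotaS}); there is no lower bound $x^{*}S(t)x\ge c\,|x|^{2}$, so smallness of $V(t)$ says nothing about $|x(t)|$. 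Nor does the integral estimate $\int_{s}^{t}|x|^{2}\mu^{K-1}\mu'\,d\tau\le CD^{2}(s)\mu(s)^{K}|x(s)|^{2}$ rescue this: turning an integral bound into a pointwise one requires comparing the possible oscillation of $|x(\tau)|^{2}$ (governed by $\|A\|$) with $\mu'/\mu$, and no such comparison is available for a general $\mu$. This is a genuine gap; the argument needs a lower bound on $S$ (explicit in the hypotheses, or extracted from additional structure) to finish.

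In the converse direction your ansatz with a $t$-independent weight $Q(\tau)$ gives $\dot S+A^{*}S+SA=-Q(t)$, but the requirement $Q(t)\ge (I+KS(t))\mu'(t)/\mu(t)$ is then circular, since $S$ is defined through $Q$. The clean resolution is a $t$-dependent weight: with
$$S(t)=\int_{t}^{\infty}\Big(\frac{\mu(\tau)}{\mu(t)}\Big)^{K}\frac{\mu'(\tau)}{\mu(\tau)}\,X^{*}(\tau,t)X(\tau,t)\,d\tau,\qquad 0<K<2\alpha,$$
the $t$-derivative of the weight produces exactly the extra term $-KS(t)\mu'(t)/\mu(t)$, so (\ref{Lyap-ec}) holds with equality, while your substitution $u=\mu(\tau)$ together with (\ref{Dm-con}) gives (\ref{CotaS}) with $C=1/(2\alpha-K)$. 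In particular, no splitting of the integral at $\mu(\tau)=d\,\mu(t)$ is needed to generate the $I$-term: it is simply the boundary contribution from evaluating the integrand at $\tau=t$. The hypothesis (\ref{restriction}) is needed instead for the issues you set aside --- $D(\cdot)$ is an arbitrary positive function, so without (\ref{restriction}) one cannot dominate the integrand locally uniformly in $t$, hence cannot justify differentiation under the integral sign or conclude that $S$ is $C^{1}$ (and positive definite in a quantified sense).
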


\section{Topological Equivalence and its Smoothness}

In order to state the following result, we recall
the property of $C^{r}$ topological equivalence on $\mathbb{R}^{+}$ introduced in \cite{CMR}.
\begin{definition}
\label{TopEqCr}
The systems \textnormal{(\ref{lin})} and \textnormal{(\ref{nolin})} are $C^{r}$ topologically equivalent on $\mathbb{R}^{+}$ if are topologically equivalent on $\mathbb{R}^{+}$ and $u\mapsto H(t,u)$ is a $C^{r}$--diffeomorphism
with $r\geq 1$ for any fixed $t\geq 0$.
\end{definition}

\begin{theorem}
\label{teorema1}
If {\bf{(P1)--\bf{(P4)}}} are satisfied
and $y\mapsto f(t,y)$ is $C^{r}$ with $r\geq 1$ for any fixed $t\geq 0$,
 then \textnormal{(\ref{lin})} and \textnormal{(\ref{nolin})} are $C^{r}$  topologically equivalent on $\mathbb{R}^{+}$.
\end{theorem}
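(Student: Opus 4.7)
The plan is to define $H$ explicitly in terms of the linear and nonlinear flows, verify Definition \ref{TopEqCr} directly, and obtain the $C^r$ regularity from the standard smooth-dependence theorem for ODEs. Because (\textbf{P2}) makes $f$ globally Lipschitz in $y$ (with the time-dependent constant $\gamma(t)$) and uniformly bounded in $y$, solutions of (\ref{nolin}) exist on all of $[0, +\infty)$ and depend $C^r$-smoothly on their initial condition. Writing $y(t, t_0, \xi_0)$ for the solution of (\ref{nolin}) with $y(t_0) = \xi_0$, I would define
\[
H(t, z) := y\bigl(t,\,0,\,X(0, t)\,z\bigr), \qquad G(t, z) := X(t, 0)\, y(0, t, z),
\]
so that $H$ first pulls $z$ back along the linear flow to time $0$ and then pushes it forward along the nonlinear flow, while $G = H^{-1}$ reverses the procedure.

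Property (i) of Definition \ref{TopEq} is then immediate: for $x(t) = X(t, 0)\eta$ a linear solution, $H[t, x(t)] = y(t, 0, \eta)$ solves (\ref{nolin}). Property (ii) is the only substantive computation: setting $\xi_0 = X(0, t) z$, the variation-of-constants formula gives
\[
H(t, z) - z \;=\; y(t, 0, \xi_0) - X(t, 0)\xi_0 \;=\; \int_0^t X(t, s)\,f\bigl(s, y(s, 0, \xi_0)\bigr)\,ds,
\]
whose norm is uniformly bounded by the constant $p$ of (\textbf{P3}) thanks to (\textbf{P1}) and (\textbf{P2}). Property (iii) together with the $C^r$ refinement of Definition \ref{TopEqCr} follows because, for each fixed $t \geq 0$, the nonlinear flow $\xi_0 \mapsto y(t, 0, \xi_0)$ is a $C^r$-diffeomorphism of $\mathbb{R}^n$ (standard smooth-dependence theorem, applicable since $f$ is $C^r$ in $y$ and global existence holds), and composing with the linear isomorphism $X(0, t)$ preserves this; the verification for $G$ is symmetric.

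The remaining hypothesis (\textbf{P4}) does not enter this flow-composition construction explicitly, but it is exactly what ensures consistency with the Palmer-style fixed-point construction of $H$ used in \cite{CMR, Jiang06, Reinfelds}: it supplies the Banach contraction constant $q < 1$ that makes the alternative implicit construction well-posed and identifies its outcome with the present $H$; it also secures uniqueness of any equilibrium of (\ref{nolin}), as noted in comment (iv) after (\textbf{P4}). The step I expect to require the most care is the uniform boundedness in (ii): in the nonuniform $(D, \mu)$-contraction regime the prefactor $D(s)$ may grow with $s$, so the uniform bound $|H(t, z) - z| \leq p$ is not automatic but is precisely what (\textbf{P3}) is designed to deliver. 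Once this boundedness is in hand, the $C^r$ conclusion reduces to smooth dependence on initial data applied pointwise in $t$.
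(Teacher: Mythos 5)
Your construction is correct, and the maps you define are in fact \emph{identical} to the ones in the paper: since the paper's $H$ satisfies $H(0,\xi)=\xi$ and the conjugacy relation $H[t,x(t,0,\xi)]=y(t,0,H(0,\xi))$, it is exactly your flow composition $H(t,z)=y(t,0,X(0,t)z)$, and likewise $X(t,0)y(0,t,\eta)=\eta-\int_0^t X(t,s)f(s,y(s,t,\eta))\,ds$ recovers the paper's $G$. The route is genuinely different, though. The paper builds $z^{*}$ as a Banach fixed point of the operator $\Gamma_{(\tau,\xi)}$ (this is where \textbf{(P4)} enters), proves $H\circ G=\mathrm{id}$ by a Gronwall/supremum argument again using $q<1$, and then needs Hadamard's global inversion theorem to upgrade the local diffeomorphism to a global one. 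You instead anchor everything at $t=0$ and read off bijectivity from the two-sided group property of the nonlinear flow (backward solvability on $[0,t]$ being guaranteed by the global Lipschitz bound $\gamma(t)$) and the $C^{r}$ regularity from smooth dependence on initial conditions; injectivity, surjectivity, and the $C^{r}$ inverse all come for free, and \textbf{(P3)} alone delivers the uniform bound $|H(t,z)-z|\le p$. This is a cleaner argument in the present setting, and your observation that \textbf{(P4)} is not needed for it is accurate --- but it is worth understanding that this shortcut works only because the dichotomy projector is the identity and the domain is the half line with a distinguished anchor time $t=0$; with a nontrivial projector, or on all of $\mathbb{R}$, no such anchor exists and the fixed-point machinery (hence \textbf{(P4)}) becomes indispensable, which is why the paper's proof is organized as it is. Two small points to tidy up: Definition \ref{TopEq} also asks you to check explicitly that $G$ carries solutions of \textnormal{(\ref{nolin})} to solutions of \textnormal{(\ref{lin})} (immediate from $G[t,y(t,\tau,\eta)]=X(t,0)y(0,\tau,\eta)$, but it should be stated), and the smooth-dependence theorem you invoke requires the partial derivatives $D_y^k f$ to be jointly continuous in $(t,y)$ rather than merely $C^{r}$ in $y$ for each fixed $t$ --- an implicit strengthening of the hypothesis that the paper itself also makes when citing Theorem 4.1 of Hartman, Ch.~V.
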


\begin{proof}

As in \cite{CMR}, the proof we will be decomposed  in four steps to make a more readable proof . Namely, the step 1 defines two auxiliary systems whose solutions are used to construct the maps $H$ and $G$ in the step 2. To prove that these maps defines a topological equivalence, the properties (i)--(ii) are verified in the step 3, the smoothness and orientation preserving properties
are dealed in the step 4.

\noindent\emph{Step 1: Construction of the maps $H$ and $G$.} Consider the initial value problems
\begin{equation}
\label{pivote2}
\left\{\begin{array}{rcl}
w'&=&A(t)w-f(t,y(t,\tau,\eta))\\
w(0)&=& 0,
\end{array}\right.
\end{equation}
and
\begin{equation}
\label{pivote1}
\left\{\begin{array}{rcl}
z'&=&A(t)z+f(t,x(t,\tau,\xi)+z)\\
z(0)&=& 0,
\end{array}\right.
\end{equation}
where
$t \mapsto x(t,\tau,\xi)$ and $t\mapsto y(t,\tau,\eta)$ are solutions of (\ref{lin}) and (\ref{nolin}) passing through $\xi$ and $\eta$
at $t=\tau$.
It is easy to see that
\begin{equation}
\label{w-star}
w^{*}(t;(\tau,\eta))=-\int_{0}^{t}X(t,s)f(s,y(s,\tau,\eta))\,ds
\end{equation}
is the unique solution of (\ref{pivote2}) which is bounded due to \textbf{(P1)} and \textbf{(P3)}.

Let $BC(\mathbb{R}^+, \mathbb{R}^n)$ be the Banach space of bounded continuous functions with the supremum norm. Now,  for any couple $(\tau, \xi) \in \mathbb{R}^+ \times \mathbb{R}^n,$ we define the following operator $\Gamma_{(\tau, \xi)} \colon BC(\mathbb{R}^+, \mathbb{R}^n) \to BC(\mathbb{R}^+, \mathbb{R}^n) $
\begin{equation}
\phi \mapsto \Gamma_{(\tau, \xi)} \phi := \displaystyle \int_0^t X(t,s) f(s,x(s,\tau,\xi) + \phi) \, ds,
\end{equation}
which is well defined by \textbf{(P3)}.

By {\bf{(P2)}} and \textbf{(P4)}, we see that the  operator $\Gamma_{(\tau, \xi)} $ is a contraction. This fact combined with the Banach contraction principle implies that
\begin{displaymath}
z^{*}(t;(\tau,\xi))=\int_{0}^{t}X(t,s)f(s,x(s,\tau,\xi)+z^{*}(s;(\tau,\xi))) \, ds
\end{displaymath}
is the unique solution of (\ref{pivote1}).

Moreover, the uniqueness of solutions allows to prove that
\begin{equation}
\label{identity1}
z^{*}(t;(\tau,\xi))=z^{*}(t;(r,x(r,\tau,\xi))) \quad \textnormal{for any $r\geq 0$},
\end{equation}
and
\begin{equation}
\label{identity2}
w^{*}(t;(\tau,\nu))=w^{*}(t;(r,y(r,\tau,\nu))) \quad \textnormal{for any $r\geq 0$}.
\end{equation}

Now, for any $t\geq 0$ we define the maps $H(t,\cdot)\colon \mathbb{R}^{n}\to \mathbb{R}^{n}$ and
$G(t,\cdot)\colon \mathbb{R}^{n}\to \mathbb{R}^{n}$ as follows:
\begin{displaymath}
\begin{array}{rcl}
H(t,\xi)&:=&\displaystyle  \xi+\int_{0}^{t}X(t,s)f(s,x(s,t,\xi)+z^{*}(s;(t,\xi))\,ds \\\\
&=&\xi + z^{*}(t;(t,\xi)),
\end{array}
\end{displaymath}
and
\begin{equation}
\label{Homeo-G}
\begin{array}{rcl}
G(t,\eta)&:=&\displaystyle \eta -\int_{0}^{t}X(t,s)f(s,y(s,t,\eta))\,ds \\\\
&=&\eta+w^{*}(t;(t,\eta)).
\end{array}
\end{equation}
By using (\ref{identity1}), we can verify that
\begin{displaymath}
\begin{array}{rcl}
H[t,x(t,\tau,\xi)]&=&\displaystyle  x(t,\tau,\xi)+\int_{0}^{t}X(t,s)f(s,x(s,t,x(t,\tau,\xi)+z^{*}(s;(t,x(t,\tau,\xi))))\,ds \\\\
&=& \displaystyle x(t,\tau,\xi)+\int_{0}^{t}X(t,s)f(s,x(s,\tau,\xi)+z^{*}(s;(\tau,\xi)))\,ds \\\\
&=&x(t,\tau,\xi)+z^{*}(t;(\tau,\xi)).
\end{array}
\end{displaymath}

\noindent\emph{Step 2: $H$ and $G$ satisfy properties (i)--(ii) of Definition \ref{TopEq}.}
By (\ref{lin}) and (\ref{pivote1}) combined with the above equality, we have that
$$
\begin{array}{rcl}
\displaystyle \frac{\partial }{\partial t} H[t,x(t,\tau,\xi)] & = & \displaystyle \frac{\partial }{\partial t} x(t, \tau, \xi) + \frac{\partial }{\partial t} z^*(t; (\tau, \xi))\\\\
& = & A(t)x(t,\tau,\xi) + A(t)z^*(t; (\tau, \xi)) + f(t,H[t,x(t,\tau,\xi)])\\\\
& = & A(t)H[t,x(t,\tau,\xi)] + f(t,H[t,x(t,\tau,\xi)]),
\end{array}
$$
then $t\mapsto H[t,x(t,\tau,\xi)]$ is solution of (\ref{nolin}) passing through
$H(\tau,\xi)$ at $t=\tau$. As consequence of uniqueness of solution we obtain
\begin{equation}\label{conj1}
H[t, x(t,\tau, \xi)] = y(t,\tau, H(\tau,\xi)),
\end{equation}
similarly, it can be proved that $t\mapsto G[t,y(t,\tau,\eta)]$ is solution of (\ref{lin}) passing through $G(\tau, \eta)$
at $t=\tau$ and

\begin{equation}\label{conj2}
G[t, y(t,\tau, \eta)] = x(t,\tau, G(\tau,\eta)) = X(t, \tau)G(\tau,\eta),
\end{equation}
and the property (i) follows.
Secondly, by using \textbf{(P2)--(P3)} it follows that
\begin{displaymath}
|H(t,\xi)-\xi|\leq  \int_{0}^{t}D(s)\left(\frac{\mu(t)}{\mu(s)}\right)^{-\alpha}\beta(s)\,ds <+\infty,
\end{displaymath}
for any $t\geq 0$. A similar inequality can be obtained for
$|G(t,\eta)-\eta|$ and the property (ii) is verified.

\noindent\emph{Step 3: $G$ is bijective for any $t\geq 0$.} We will
first show that $H(t, G(t, \eta)) = \eta$ for any $t\geq 0$. Indeed,
$$
\begin{array}{rcl}
H[t,G[t,y(t,\tau,\eta)]] & = & G[t,y(t,\tau,\eta)] \\\\ & &  + \displaystyle \int_0^t X(t,s) f(s,x(s,t,G[t,y(t,\tau,\eta)])+z^*(s;(t,G[t,y(t,\tau,\eta)]))) \, ds\\\\
& = &  y(t,\tau, \eta)  -\displaystyle \int_0^t  X(t,s)f(s,y(s,\tau, \eta)) \, ds\\\\
&&+\displaystyle \int_0^t X(t,s) f(s,x(s,t,G[t,y(t,\tau,\eta)])+z^*(s;(t,G[t,y(t,\tau,\eta)]))) \, ds.
\end{array}
$$
Let $\omega(t) = | H[t, G[t,y(t,\tau,\eta)]] - y(t,\tau, \eta)| .$ Hence by using {\bf{(P1)}} and {\bf{(P2)}} we have that
$$
\begin{array}{ll}
 \omega(t) & =  \left |\displaystyle \int_0^t X(t,s) \{f(s,x(s,t,G[t,y(t,\tau,\eta)])+z^*(s;(t,G[t,y(t,\tau,\eta)]))) -f(s,y(s,\tau, \eta))\} \, ds \right |\\\\
&\leq  \displaystyle \int_0^t D(s)\left(\frac{\mu(t)}{\mu(s)}\right)^{-\alpha} \hspace{-0.5cm}\gamma(s)|\{x(s,t,G[t,y(t,\tau,\eta)])+z^*(s;(t,G[t,y(t,\tau,\eta)])) -y(s,\tau, \eta)\}| \, ds.
\end{array}
$$
Notice that,
$$x(s,t,G[t,y(t,\tau,\eta)])+z^*(s;(t,G[t,y(t,\tau,\eta)])) = H[s,x(s,t,G[t,y(t,\tau,\eta)])]$$
and recalling that
$$x(s,t,G[t,y(t,\tau,\eta)]) = x(s, \tau, G(\tau, \eta)) = G[s,y(s,\tau, \eta)],$$
we can see
$$ H[s,x(s,t,G[t,y(t,\tau,\eta)])] = H[s,G[s,y(s,\tau,\eta)]].$$
Therefore, by \textbf{(P4)} we obtain
$$\omega(t) \leq \int_0^t D(s)\left(\frac{\mu(t)}{\mu(s)}\right)^{-\alpha}\hspace{-0.3cm}\gamma(s) \omega(s) \, ds \leq  \displaystyle q\sup_{s \in \mathbb{R}^+} \{\omega(s)\} \quad \rm{for \,\, all} \quad t \geq 0.$$

The supremum is well defined by property (i) and the fact that all the solutions of systems \eqref{lin} and \eqref{nolin} are bounded on $\mathbb{R}^+$. Now, we take the supremum on the left side above and it follows that $\omega(t) = 0$ for any $t \geq 0$ since $0<q<1$. In particular, when we take $t = \tau$ we obtain  $H(\tau, G(\tau, \eta)) = \eta.$

Next, we will prove that $G(t, H(t, \xi)) = \xi.$ In fact, due to (\ref{conj1}) we have that
$$
\begin{array}{rcl}
G[t,H[t,x(t,\tau,\xi)]] & = & H[t,x(t,\tau,\xi)] \\\\ & &  - \displaystyle \int_0^t X(t,s) f(s,y(s,t,H[t,y(x,\tau,\xi)])) \, ds\\\\
&= &  x(t,\tau, \xi) + \\\\
&& \displaystyle \int_0^t  X(t,s) \{f(s,H[s,x(s,\tau,\xi)]) - f(s,y(s,\tau,H(\tau,\xi)))\} \, ds\\\\
& = &  x(t,\tau, \xi).
\end{array}
$$
and taking $t=\tau$ leads to $G(\tau,H(\tau,\xi))=\xi$. In consequence, for any $t\geq 0$, $H$ is a bijection and $G$ is its inverse.

\medskip

\noindent\emph{Step 4: $G$ is a preserving orientation diffeomorphism for any $t\geq 0$.} As  $y\mapsto f(t,y)$ is $C^{r}$ with $r\geq 1$ for any $t\geq 0$, we can use classical results (see \emph{e.g.}, Theorem 4.1 from \cite[Ch.V]{Hartman}) to see that
the map
$\eta \mapsto y(t,\tau,\eta)$ is also $C^{r}$ for any  fixed couple $(t,\tau)$. Then, as $y\mapsto f(t,y)$ is $C^{1}$, it follows that $y\mapsto Df(t,y)$ and  $\eta \mapsto \partial y/\partial \eta$ are continuous. Indeed, we can see that this last map satisfy the matrix differential equation
\begin{equation}
\label{MDE1}
\left\{
\begin{array}{rcl}
\displaystyle \frac{d}{dt}\frac{\partial y}{\partial\eta}(t,\tau,\eta)&=&\displaystyle \{A(t)+Df(t,y(t,\tau,\eta))\}\frac{\partial y}{\partial \eta}(t,\tau,\eta),\\\\
\displaystyle \frac{\partial  y}{\partial\eta}(\tau,\tau,\eta)&=&I.
\end{array}\right.
\end{equation}

Now, it is easy to see that
\begin{equation}
\label{derivada-parcial}
\frac{\partial G}{\partial \eta_{i}}(t,\eta)=e_{i}-\int_{0}^{t}X(t,s)Df(s,y(s,t,\eta))\frac{\partial y}{\partial \eta_{i}}(s,t,\eta)\,ds \quad (i=1,\ldots,n),
\end{equation}
which implies that the partial derivatives exists and are continuous for any fixed $t\geq 0$, then $\eta \mapsto G(t,\eta)$ is $C^{1}$.

By using the identity $X(t,s)A(s)=-\frac{\partial }{\partial s}X(t,s)$ combined with
(\ref{MDE1}) we can deduce that for any $t\geq 0$, the Jacobian matrix of $\eta \mapsto G(t,\eta)$ is given by
\begin{equation}
    \label{machine}
\begin{array}{rcl}
\displaystyle\frac{\partial G}{\partial\eta}(t,\eta)&=&  \displaystyle I-\int_{0}^{t}X(t,s)Df(s,y(s,t,\eta))
\frac{\partial y}{\partial\eta}(s,t,\eta)\,ds \\\\
&=&I - \displaystyle \int_{0}^{t}\frac{d}{ds}\left\{X(t,s)\frac{\partial y}{\partial \eta}(s,t,\eta)\right\}\,ds \\\\
&=&\displaystyle X(t,0)\frac{\partial y(0,t,\eta)}{\partial \eta},
\end{array}
\end{equation}
and Theorems 7.2 and 7.3 from  \cite[Ch.1]{Coddington}  imply that
$Det \frac{\partial G(t,\eta)}{\partial \eta}>0$ for any $t\geq 0$.

Summarizing, we have that $\eta \mapsto G(t,\eta)$ is $C^{1}$ and its Jacobian matrix has a non vanishing determinant. In addition, let us recall that
$$
G(t,\eta)=\eta + w^{*}(t;(t,\eta)),
$$
where $w^{*}(t;(t,\eta))$ is given by (\ref{w-star}). Since \textbf{(P3)} implies  $|w^{*}(t;(t,\eta))|\leq p$ for any $(t,\eta)$, we can see that
$|G(t,\eta)|\to +\infty$ as $|\eta|\to +\infty$. Therefore, by Hadamard's Theorem (see \emph{e.g} \cite{Plastock, Radulescu}), we conclude that $\eta \mapsto G(t,\eta)$ is a global preserving orientation diffeomorphism for any fixed $t\geq 0$.

Finally, as $\eta \to y(0,t,\eta)$ is $C^{r}$, we use the identity (\ref{machine}) to deduce that the $m$--th partial derivatives of $\eta \mapsto G(t,\eta)$ for any fixed $t\geq 0$ are given by
\begin{displaymath}
\frac{\partial^{|m|} G(t,\eta)}{\partial\eta_{1}^{m_{1}}\cdots \partial \eta_{n}^{m_{n}}}=X(t,0)\frac{\partial^{|m|} y(0,t,\eta)}{\partial\eta_{1}^{m_{1}}\cdots \partial \eta_{n}^{m_{n}}}, \quad \textnormal{where $|m|=m_{1}+\ldots+m_{n}\leq r$},
\end{displaymath}
and the Theorem follows.
\end{proof}

\begin{remark}
We point out the formal similarity with the proofs
of Theorems 1 and 3 from \cite{CMR}. However, there exists subtle technicalities induced by the nonuniform contraction and the properties of the nonlinearities.
\end{remark}

\section{Some consequences of the topological equivalence}

This section is devoted to study the relation between the equilibria of (\ref{lin}) and (\ref{nolin}) when are topologically equivalent on the positive half line. It is important to emphasize that there exist nonlinear systems (\ref{nolin}) satisfying \textbf{(P1)--(P4)} which does not have the same equilibrium for all $t\geq 0$ as shown by Jiang in \cite[p.487]{Jiang06}.

\begin{lemma}
\label{unicidad}
Assume that \textnormal{\textbf{(P1)--(P4)}} are fulfilled. If (\ref{nolin}) has an equilibrium then it is unique.
\end{lemma}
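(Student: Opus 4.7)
The plan is to assume that two equilibria $y_1^{*}, y_2^{*}\in \mathbb{R}^{n}$ of \textnormal{(\ref{nolin})} exist and to derive $y_1^{*}=y_2^{*}$ directly from the contraction bound \textbf{(P1)} combined with the Lipschitz/threshold conditions in \textbf{(P2)} and \textbf{(P4)}. By equilibrium I mean a point $y^{*}\in\mathbb{R}^{n}$ such that the constant function $y(t)\equiv y^{*}$ is a solution of \textnormal{(\ref{nolin})} on $[0,+\infty[$, equivalently $A(t)y^{*}+f(t,y^{*})=0$ for every $t\geq 0$.

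The first step is to represent each equilibrium through the variation of constants formula. Since $y_{i}^{*}(t)\equiv y_{i}^{*}$ is a solution, for every $t\geq 0$ one has
\[
y_{i}^{*}=X(t,0)\,y_{i}^{*}+\int_{0}^{t}X(t,s)f(s,y_{i}^{*})\,ds, \qquad i=1,2.
\]
Subtracting and taking norms, and then applying \textbf{(P1)} to the first term together with the Lipschitz bound from \textbf{(P2)} inside the integral, the crucial point is that the constant quantity $|y_{1}^{*}-y_{2}^{*}|$ factors out of the integral. Using \textbf{(P4)} to estimate what remains, I obtain
\[
|y_{1}^{*}-y_{2}^{*}|\;\leq\;\Bigl[D(0)\mu(t)^{-\alpha}+q\Bigr]\,|y_{1}^{*}-y_{2}^{*}|, \qquad t\geq 0.
\]

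The final step is to let $t\to +\infty$. Since $\mu(t)\to+\infty$ by \textbf{(P1)}, the bracketed factor tends to $q<1$, so the inequality forces $|y_{1}^{*}-y_{2}^{*}|=0$, i.e.\ $y_{1}^{*}=y_{2}^{*}$.

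There is no genuine obstacle in this argument; it is essentially a one-line Gronwall-type estimate. The only subtlety worth highlighting is the interplay between constancy and the integral bound: had $y_{1}^{*},y_{2}^{*}$ been allowed to vary in $t$, the inequality coming from \textbf{(P2)} would only yield a Gronwall bound of the form $\sup|z(t)|\leq \tfrac{D(0)|z(0)|}{1-q}$, which is inconclusive. It is precisely the time-independence of the equilibrium that lets $|y_{1}^{*}-y_{2}^{*}|$ be pulled out of the integral, so that the smallness assumption $q<1$ in \textbf{(P4)} combines with the asymptotic vanishing of $X(t,0)$ to produce the strict contraction.
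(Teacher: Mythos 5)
Your proof is correct and follows essentially the same route as the paper's: representing each equilibrium via the variation of constants formula, subtracting, and using \textbf{(P1)}, \textbf{(P2)} and \textbf{(P4)} to obtain the estimate $|y_1^*-y_2^*|\leq\bigl[D(0)\mu(t)^{-\alpha}+q\bigr]|y_1^*-y_2^*|$, which forces $y_1^*=y_2^*$ as $t\to+\infty$ since $q<1$. No discrepancies to report.
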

\begin{proof}
Let us assume that (\ref{nolin}) has two different equilibria $\bar{y}_{1}$ and $\bar{y}_{2}$, then it follows that
\begin{displaymath}
\bar{y}_{i}=X(t,0)\bar{y}_{i}+\int_{0}^{t}X(t,s)f(s,\bar{y}_{i})\,ds \quad \textnormal{for any $t\geq 0$ and $i=1,2$}.
\end{displaymath}

Now, by \textbf{(P1),(P2)} and \textbf{(P4)} we can deduce that
\begin{displaymath}
\begin{array}{rcl}
\displaystyle |\bar{y}_{1}-\bar{y}_{2}| &\leq& \displaystyle D(0)\left(\frac{\mu(t)}{\mu(0)}\right)^{-\alpha}|\bar{y}_{1}-\bar{y}_{2}|+\int_{0}^{t}D(s) \left(\frac{\mu(t)}{\mu(s)}\right)^{-\alpha}\gamma(s)|\bar{y}_{1}-\bar{y}_{2}|\,ds \\\\
& \leq & \displaystyle \left(D(0)\left(\frac{\mu(t)}{\mu(0)}\right)^{-\alpha}+q\right)|\bar{y}_{1}-\bar{y}_{2}|,
\end{array}
\end{displaymath}
which implies that $1\leq  q$ for bigger values of $t$, obtaining a contradiction.
\end{proof}

\begin{lemma}
\label{HPF}
Assume that \textnormal{\textbf{(P1)--(P4)}} are fulfilled.
\begin{itemize}
\item[(i)] If $\bar{y}=0$ is equilibrium of \textnormal{(\ref{nolin})}, namely $f(t,0)=0$ for any $t\geq 0$, then
$$
H(t,0)=G(t,0)=0 \quad \textnormal{for any} \quad t\geq 0.
$$
\item[(ii)]
If the system \textnormal{(\ref{nolin})} has a equilibrium  $\bar{y}\neq 0$, then
\begin{displaymath}
\lim\limits_{t\to +\infty}G(t,\bar{y})=0.
\end{displaymath}
\item[(iii)]
If the system \textnormal{(\ref{nolin})} has a equilibrium  $\bar{y}\neq 0$ and
\begin{equation}
\label{dominacion}
\displaystyle \lim\limits_{t\to+\infty}\mu^{-\alpha}(t)\exp\left(\int_{0}^{t}D(s)\gamma(s)\,ds\right)=0,
\end{equation}
then
\begin{displaymath}
\lim\limits_{t\to +\infty}H(t,0)=\bar{y}.
\end{displaymath}
\end{itemize}
\end{lemma}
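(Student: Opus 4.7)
The plan is to handle each of the three parts by leveraging a different feature of the explicit formulas and conjugacy identities produced in the proof of Theorem \ref{teorema1}.

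Part (i) will follow immediately from the definition (\ref{Homeo-G}) of $G$: the hypothesis $f(t,0)=0$ forces $y(s,t,0)\equiv 0$ by uniqueness, so the integrand in $G(t,0)$ vanishes and $G(t,0)=0$ for every $t\geq 0$. For $H(t,0)$, rather than recomputing the fixed point I would invoke the already-established identity $H(t,G(t,\eta))=\eta$ from Step 3 of the proof of Theorem \ref{teorema1}; evaluated at $\eta=0$ it yields $H(t,0)=0$.

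Part (ii) reduces to a one-line computation using the conjugacy (\ref{conj2}): with $\tau=0$ and $\eta=\bar{y}$, and the fact that $y(t,0,\bar{y})=\bar{y}$ when $\bar{y}$ is an equilibrium of (\ref{nolin}), the identity $G[t,y(t,\tau,\eta)]=X(t,\tau)G(\tau,\eta)$ collapses to $G(t,\bar{y})=X(t,0)G(0,\bar{y})=X(t,0)\bar{y}$ (since (\ref{Homeo-G}) gives $G(0,\bar{y})=\bar{y}$). Then (\textbf{P1}) yields $|G(t,\bar{y})|\leq D(0)\mu(t)^{-\alpha}|\bar{y}|\to 0$.

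Part (iii) is the substantive case. I would first establish the identification $H(t,0)=y(t,0,0)$: since $x(s,t,0)\equiv 0$, the integral equation defining $s\mapsto z^{*}(s;(t,0))$ becomes $z(s)=\int_0^s X(s,r)f(r,z(r))\,dr$, which is independent of the parameter $t$ and, by uniqueness of the fixed point, coincides with $y(s,0,0)$; evaluating at $s=t$ gives the identification. It then remains to prove $y(t,0,0)\to\bar{y}$. Setting $u(t):=y(t,0,0)-\bar{y}$ and using the equilibrium identity $A(t)\bar{y}+f(t,\bar{y})\equiv 0$, the function $u$ solves $\dot u=A(t)u+[f(t,\bar{y}+u)-f(t,\bar{y})]$ with $u(0)=-\bar{y}$. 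Variation of constants combined with (\textbf{P1})--(\textbf{P2}) yields, after multiplication by $\mu(t)^{\alpha}$,
$$
\mu(t)^{\alpha}|u(t)|\leq D(0)|\bar{y}|+\int_0^t D(s)\gamma(s)\,\mu(s)^{\alpha}|u(s)|\,ds,
$$
and Gronwall's inequality then gives
$$
|u(t)|\leq D(0)|\bar{y}|\,\mu(t)^{-\alpha}\exp\!\left(\int_0^t D(s)\gamma(s)\,ds\right),
$$
which vanishes as $t\to +\infty$ by hypothesis (\ref{dominacion}).

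The main obstacle is part (iii): both the nontrivial identification $H(t,0)=y(t,0,0)$ and the Gronwall estimate must be carried out carefully, and the precise shape of (\ref{dominacion}) is dictated exactly by the bound that the Gronwall argument produces, which explains why (\textbf{P4}) alone does not suffice. Parts (i) and (ii), by contrast, are essentially corollaries of the conjugacy relations already obtained in the proof of Theorem \ref{teorema1}.
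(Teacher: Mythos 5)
Your proposal is correct and follows essentially the same route as the paper: part (iii), the substantive one, is exactly the paper's argument (identify $H(t,0)$ with the solution of the nonlinear system through the origin, write the variation-of-constants identity against the equilibrium, multiply by $\mu(t)^{\alpha}$ and apply Gronwall to obtain the bound $D(0)|\bar{y}|\,\mu(t)^{-\alpha}\exp(\int_0^t D(s)\gamma(s)\,ds)$, which (\ref{dominacion}) kills). In parts (i) and (ii) you take small shortcuts — using $H(t,G(t,\eta))=\eta$ and the conjugacy (\ref{conj2}) instead of the paper's direct computation of $w^{*}$ via $f(s,\bar{y})=-A(s)\bar{y}$ and integration by parts — but these are equivalent and both land on $G(t,\bar{y})=X(t,0)\bar{y}$.
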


\begin{proof}
If $f(t,0)=0$ for any $t\geq 0$ then (\ref{pivote2}) becomes (\ref{lin}) and $w^{*}(t;(\tau,0))=0$ for any $t\geq 0$ and by the definition of $G(t,\cdot)$, we have that $G(t,0)=0$ and (i) follows.

Now, let us assume that $\bar{y}\neq 0$ is the unique equilibrium of (\ref{nolin}). Then, the initial value problem (\ref{pivote2}) becomes
\begin{displaymath}
\left\{\begin{array}{rcl}
w'&=&A(t)w-f(t,\bar{y})\\
w(0)&=& 0,
\end{array}\right.
\end{displaymath}
whose solution is given by
\begin{displaymath}
\begin{array}{rcl}
w^{*}(t;(\tau,\bar{y})) &=& \displaystyle         -\int_{0}^{t}X(t,s)f(s,\bar{y})\,ds \\\\
 &=&\displaystyle \int_{0}^{t}X(t,s)A(s)\bar{y}\,ds\\\\
 &=&\displaystyle  -\int_{0}^{t}\frac{\partial}{\partial s}X(t,s)\bar{y}\,ds \\\\
 &=& (X(t,0)-I)\bar{y},
\end{array}
\end{displaymath}
and by using definition of $G(t,\cdot)$ we obtain $G(t,\bar{y})=X(t,0)\bar{y}$ and it follows by \textbf{(P1)} that $\lim\limits_{t\to +\infty}G(t,\bar{y})=0$.

Similarly, if $\xi=0$, the initial value problem (\ref{pivote1}) becomes
\begin{displaymath}
\left\{\begin{array}{rcl}
z'&=&A(t)z+f(t,z)\\
z(0)&=& 0,
\end{array}\right.
\end{displaymath}
which is not parameter dependent and its solution is
\begin{displaymath}
z^{*}(t)=\int_{0}^{t}X(t,s)f(s,z^{*}(s))\,ds.
\end{displaymath}

By the definition, we know that $H(t,0)=z^{*}(t)$, and as $\bar{y}$ is a fixed point, we have that
\begin{displaymath}
H(t,0)-\bar{y}  =  \displaystyle  -X(t,0)\bar{y}+\int_{0}^{t}X(t,s)\{f(s,H(s,0))-f(s,\bar{y})\}\,ds,
\end{displaymath}
which implies that
\begin{displaymath}
\begin{array}{rcl}
|H(t,0)-\bar{y}| & \leq  & \displaystyle  D(0)\left(\frac{\mu(t)}{\mu(0)}\right)^{-\alpha}|\bar{y}|+\int_{0}^{t}
D(s)\left(\frac{\mu(t)}{\mu(s)}\right)^{-\alpha}\gamma(s) |H(s,0)-\bar{y}|\,ds.
\end{array}
\end{displaymath}

By Gronwall's inequality we can deduce that
\begin{displaymath}
|H(t,0)-\bar{y}|\leq D(0)\mu^{-\alpha}(t)\exp\left(\int_{0}^{t}D(s)\gamma(s)\,ds\right)|\bar{y}|.
\end{displaymath}

Finally, by using (\ref{dominacion}) we obtain $\lim\limits_{t\to +\infty}H(t,0)=\bar{y}$ and the result follows.
\end{proof}

\begin{remark}
The condition (\ref{restriction}) follows immediately when the linear system has an exponential dichotomy with the identity as projector, \emph{i.e}, uniform asymptotical stability. Indeed, when we consider $D(s)=K$ and $\mu(t)=e^{t}$, the property \textbf{(P4)} becomes $K\gamma/\alpha<1$ and (\ref{restriction}) is a direct consequence from the Gronwall's Lemma.
\end{remark}

\begin{remark}
In Theorem 1 from \cite{CMR} we prove that if the linear system (\ref{lin}) is uniformly asymptotically stable, then the homeomorphism $H(t, \cdot)$ is uniformly continuous. In addition, when the nonlinear system has a unique equilibrium, the uniform continuity of $H(t, \cdot)$ combined with the property (\ref{restriction})  imply that the equilibrium of (\ref{nolin}) is also uniform asymptotically stable (see \cite[Theorem 2]{CMR}). As in a general case, the condition (\ref{restriction})  is not always verified, the preservation of asymptotic stability via homeomorphism $H(t, \cdot)$ can not be established.
\end{remark}

\section{Stability results in terms of density functions}

From now on, we will assume that $f(t,0)=0$ for any $t\geq 0$. By Lemma \ref{unicidad}, it follows that the origin is the unique equilibrium of (\ref{nolin}).

\subsection{A density converse result for linear systems}

The following result generalizes a density converse stability result obtained by P. Monz\'on in \cite[Prop.2.3]{Monzon},
which assumed that (\ref{lin}) is uniformly asymptotically stable.

\begin{theorem}
\label{densidad-lineal}
Assume that the system (\ref{lin})
satisfies the property \textnormal{\textbf{(P1)}} and its transition matrix satisfies (\ref{restriction}), then there exists a density function associated to (\ref{lin}).
\end{theorem}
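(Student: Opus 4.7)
The strategy is to use the converse part of Proposition \ref{2-equiv} to produce a quadratic Lyapunov-type object and then to extract from it an explicit density function.

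First I would invoke Proposition \ref{2-equiv}: under \textbf{(P1)} together with the extra bound (\ref{restriction}) on the transition matrix, there exists a $C^{1}$ symmetric positive-definite matrix $S(t)$, $t\geq 0$, satisfying (\ref{CotaS}) and the differential inequality (\ref{Lyap-ec}). Writing $V(t,x)=x^{*}S(t)x$, the natural candidate is
\begin{equation*}
\rho(t,x)\ :=\ \frac{\psi(t)}{V(t,x)^{a}},
\end{equation*}
with a scalar weight $\psi(\cdot)>0$ and an exponent $a>0$ to be adjusted. This $\rho$ is of class $C^{1}$ on $\mathbb{R}\times(\mathbb{R}^{n}\setminus\{0\})$ since $S(\cdot)$ is $C^{1}$ and positive-definite.

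A direct computation, using $\nabla V = 2S(t)x$ and the fact that $2x^{*}S(t)A(t)x = x^{*}(S(t)A(t)+A^{*}(t)S(t))x$, yields
\begin{equation*}
\frac{\partial \rho}{\partial t}+\nabla\!\cdot[\rho\,A(t)x]\ =\ V^{-a}\bigl[\psi'(t)+\psi(t)\tr A(t)\bigr]\ -\ a\,\psi(t)\,V^{-a-1}\,x^{*}\bigl[\dot S+A^{*}S+SA\bigr]x.
\end{equation*}
The first bracket can be made to vanish identically by choosing the weight
\begin{equation*}
\psi(t)\ :=\ \exp\!\Bigl(-\int_{0}^{t}\tr A(s)\,ds\Bigr),
\end{equation*}
which is $C^{1}$ and strictly positive. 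For the second bracket, inequality (\ref{Lyap-ec}) gives $x^{*}[\dot S+A^{*}S+SA]x \leq -x^{*}(I+KS(t))x\,\mu'(t)/\mu(t)$, whence
\begin{equation*}
\frac{\partial \rho}{\partial t}+\nabla\!\cdot[\rho\,A(t)x]\ \geq\ a\,\psi(t)\,V^{-a-1}\,x^{*}(I+KS(t))x\,\frac{\mu'(t)}{\mu(t)},
\end{equation*}
and the right-hand side is strictly positive for every $t\geq 0$ and every $x\neq 0$, since $\mu$ is strictly increasing and $S(t)$ is positive-definite.

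The remaining task is the integrability condition of Definition \ref{density}. Using the change of variables $y=S(t)^{1/2}x$, the integral $\int_{|x|>\varepsilon}\rho(t,x)\,dx$ is, up to a positive factor depending on $t$, bounded by a constant multiple of $\int_{|y|>r(t,\varepsilon)}|y|^{-2a}\,dy$ for some positive lower radius $r(t,\varepsilon)$; this is finite precisely when $2a>n$. Choosing any $a>n/2$ therefore yields a bona fide density function associated to (\ref{lin}). The only delicate point — and the reason the argument extends beyond the uniform framework of \cite{Monzon} — is the isolation of the trace term via the weight $\psi$; once that is done, positivity of the divergence is inherited directly from (\ref{Lyap-ec}), without any further assumption on the size or sign of $\tr A(t)$.
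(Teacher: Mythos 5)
Your proposal follows the same backbone as the paper's proof: invoke the converse direction of Proposition \ref{2-equiv} to obtain $S(t)$ satisfying (\ref{CotaS}) and (\ref{Lyap-ec}), and take a negative power of the quadratic form $V(t,x)=x^{*}S(t)x$ as the density candidate. The one genuine difference is how the trace term $\rho\,\tr A(t)$ arising from the divergence is handled. The paper uses the bare candidate $\rho=V^{-a}$ and absorbs $\tr A(t)\,x^{*}S(t)x$ into the good term coming from (\ref{Lyap-ec}) by taking the exponent $a$ large, invoking boundedness of $A(t)$ and positive-definiteness of $S(t)$; this forces one to compare $a\,\mu'(t)/\mu(t)$ against $\tr A(t)$ uniformly in $t$, which is a somewhat delicate point when $\mu'(t)/\mu(t)$ is not bounded below. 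Your multiplicative weight $\psi(t)=\exp\bigl(-\int_{0}^{t}\tr A(s)\,ds\bigr)$ cancels the trace term exactly, so strict positivity of $\partial_{t}\rho+\nabla\cdot(\rho A(t)x)$ drops out of (\ref{Lyap-ec}) alone (using $\mu'>0$), with no largeness condition on $a$ beyond the integrability requirement $a>n/2$, which you also justify more explicitly (via $y=S(t)^{1/2}x$) than the paper does. Both arguments are correct in substance; yours is cleaner on the positivity step and buys independence from the size and sign of $\tr A(t)$, at the modest cost of a time-dependent prefactor in the density.
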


\begin{proof}
By Proposition \ref{2-equiv}, there exist a $C^{1}$ positive--definite matrix $S(t)$ satisfying (\ref{CotaS}) and (\ref{Lyap-ec}). Now let us define
\begin{equation}
\label{densidad-lineal2}
\rho(t,x)=V(t,x)^{-a} \quad \textnormal{with $V(t,x)=x^{*}S(t)x$ and $a>0$}.
\end{equation}

By using the identity
\begin{displaymath}
\triangledown V(t,x)\cdot A(t)x=x^{*}\left[A^{*}(t)S(t)+S(t)A(t)\right]x,
\end{displaymath}
we can deduce that
\begin{displaymath}
\begin{array}{rcl}
\displaystyle \frac{\partial \rho}{\partial t}+\triangledown \cdot (\rho(t,x) A(t)) &=&
\displaystyle \frac{\partial \rho(t,x)}{\partial t}+
\triangledown \rho(t,x)\cdot A(t)x+\rho(t,x)\tr A(t) \\\\
&=&
\displaystyle
-a V(t,x)^{-(a+1)}\left(\frac{\partial V}{\partial t}(t,x)+\triangledown V(t,x)A(t)x\right) \\\\
\displaystyle
&&+V(t,x)^{-a}\tr A(t) \\\\
&=&V(t,x)^{-(a+1)}L(S(t),A(t),x),
\end{array}
\end{displaymath}
where
\begin{displaymath}
L(S(t),A(t),x)=x^{*}\left(-a\left\{\dot{S}(t)+A^{*}(t)S(t)+S(t)A(t)\right\}+\tr A(t)S(t)\right)x.
\end{displaymath}

By using (\ref{Lyap-ec}), combined with boundedness of $A(t)$ and positive--definiteness of $S(t)$, we choose $a>0$ big enough such that
\begin{displaymath}
\begin{array}{rcl}
L(S(t),A(t),x) &\geq &
\displaystyle
x^{*}\left\{a \left( I+KS(t)\right)\frac{\mu'(t)}{\mu(t)}+\tr A(t)S(t)\right\}x \\\\
\displaystyle
& = & x^{*}\left\{a  \frac{\mu'(t)}{\mu(t)}I+S(t)\left[a K \frac{\mu'(t)}{\mu(t)}+\tr A(t)\right]\right\}x \\\\
&>&0,
\end{array}
\end{displaymath}
therefore the function $\rho(t,x)$ is positive. Finally, the integrability of $x\mapsto \rho(t,x)$ in any domain $||x||\geq r>0$ is also verified by choosing some constant $a>0$ big enough and the result follows.
\end{proof}

\begin{remark}
As a consequence of (\ref{CotaS}) we can see that the density function of (\ref{lin}) satisfies the following inequality
\begin{displaymath}
\left(C D^{2}(t)\right)^{-\alpha}||x||^{-2\alpha}\leq \rho(t,x).
\end{displaymath}
\end{remark}

\subsection{Density function for the nonlinear system via the Diffeomorphism $G$}

The main result states that if (\ref{lin}) and (\ref{nolin}) are $C^{2}$ topologically equivalents on $\mathbb{R}^{+}$ then we can construct a density function for the nonlinear system in terms of the density function for (\ref{lin}) and the diffeomorphism $G(t,\cdot)$.

\begin{theorem}
\label{ex-dens0}
If \textnormal{\textbf{(P1)--(P4)}} and (\ref{restriction}) are satisfied, and $f(t,\cdot)$ is of class $C^{2}$ for any $t\geq 0$,
then there exists a density function $\bar{\rho}\in
C(\mathbb{R}^{+}\times (\mathbb{R}^{n}\setminus\{0\}),[0,+\infty))$
associated to \textnormal{(\ref{nolin})}, defined by
\begin{equation}
\label{dens-nl}
\bar{\rho}(t,\eta)=\rho(t,G(t,\eta))\det\frac{\partial G(t,\eta)}{\partial \eta},
\end{equation}
where $G(t,\cdot)$ is the $C^{2}$ preserving orientation diffeomorphism previously defined.
\end{theorem}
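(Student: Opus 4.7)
My plan is to produce $\bar{\rho}$ by pushing the linear density through the conjugacy $G(t,\cdot)$ and to verify that the transformation rule for continuity inequalities under a time--dependent diffeomorphism delivers the required sign. Since \textbf{(P1)} and (\ref{restriction}) are in force, Theorem \ref{densidad-lineal} furnishes a density $\rho(t,x)=V(t,x)^{-a}$ for (\ref{lin}). Because $f(t,\cdot)$ is $C^{2}$, Theorem \ref{teorema1} applied with $r=2$ makes $\eta\mapsto G(t,\eta)$ a $C^{2}$ preserving--orientation diffeomorphism; writing $J(t,\eta):=\frac{\partial G}{\partial \eta}(t,\eta)$ and $\mathcal{J}:=\det J$, classical smoothness of the flow under a $C^{2}$ drift makes $\mathcal{J}$ jointly $C^{1}$ in $(t,\eta)$, while identity (\ref{machine}) shows $\mathcal{J}>0$. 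Hence $\bar{\rho}$ is $C^{1}$ and strictly positive on $\mathbb{R}^{+}\times(\mathbb{R}^{n}\setminus\{0\})$.

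For the integrability requirement of Definition \ref{density} I fix $\varepsilon>0$ and split the integral at some $R>p$, where $p$ is the constant of property \textbf{(P3)}. On the bounded annulus $\{\varepsilon<|\eta|\leq R\}$ the integrand $\bar{\rho}(t,\cdot)$ is continuous, so the contribution is finite. On $\{|\eta|>R\}$, the global change of variables $x=G(t,\eta)$ turns $\int \bar{\rho}(t,\eta)\,d\eta$ into $\int_{G(t,\{|\eta|>R\})}\rho(t,x)\,dx$; the uniform bound $|G(t,\eta)-\eta|\leq p$ from Step~2 of the proof of Theorem \ref{teorema1} gives $G(t,\{|\eta|>R\})\subseteq\{|x|>R-p\}$, a region on which $\rho(t,\cdot)$ is integrable by Theorem \ref{densidad-lineal}.

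The main task is the continuity inequality. Differentiating the conjugation $G(t,y(t,\tau,\eta))=X(t,\tau)G(\tau,\eta)$ with respect to $t$ at $t=\tau$ yields, for every $(t,\eta)$, the structural link
\begin{equation*}
\partial_{t}G(t,\eta)=A(t)G(t,\eta)-J(t,\eta)\bigl[A(t)\eta+f(t,\eta)\bigr].
\end{equation*}
The goal is then the master identity
\begin{equation*}
\frac{\partial \bar{\rho}}{\partial t}+\triangledown\cdot\bigl[\bar{\rho}(t,\eta)(A(t)\eta+f(t,\eta))\bigr]=\mathcal{J}(t,\eta)\left.\left(\frac{\partial \rho}{\partial t}+\triangledown\cdot[\rho(t,x)A(t)x]\right)\right|_{x=G(t,\eta)},
\end{equation*}
whose right--hand side is strictly positive by Theorem \ref{densidad-lineal} and by $\mathcal{J}>0$.

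To derive the master identity I combine two ingredients. The spatial one is the change--of--variables rule $\triangledown_{\eta}\cdot(\mathcal{J}\,J^{-1}(V\circ G))=\mathcal{J}\,(\triangledown_{x}\cdot V)\circ G$ valid for any $C^{1}$ field $V$ on the linear side, an immediate consequence of the Jacobian formula for the Lebesgue measure. The temporal one is Jacobi's formula $\partial_{t}\mathcal{J}=\mathcal{J}\,\tr(J^{-1}\partial_{t}J)$ which, after introducing the auxiliary field $u(t,x):=\partial_{t}G(t,H(t,x))$ (well defined because $G(t,\cdot)$ is a diffeomorphism), is recognised as $\mathcal{J}\,(\triangledown_{x}\cdot u)\circ G$. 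Differentiating $\bar{\rho}=\rho(t,G)\,\mathcal{J}$ by the product rule, grouping the two summands $\mathcal{J}(\triangledown_{x}\rho)(t,G)\cdot\partial_{t}G+\rho(t,G)\,\partial_{t}\mathcal{J}$ as $\triangledown_{\eta}\cdot(\bar{\rho}\,J^{-1}\partial_{t}G)$ through the spatial rule applied to $u$, and finally substituting the structural link into $\partial_{t}G$ collapses everything into the master identity. The main obstacle will be the bookkeeping of $x$-- versus $\eta$--derivatives in the change--of--variables step; the decisive cancellation is that the $-J[A(t)\eta+f(t,\eta)]$ piece of the structural link is precisely what manufactures the missing $-\triangledown_{\eta}\cdot(\bar{\rho}(A(t)\eta+f(t,\eta)))$ summand that assembles the continuity expression for $\bar{\rho}$.
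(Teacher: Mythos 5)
Your proposal is correct, and its overall architecture coincides with the paper's: the same formula $\bar{\rho}(t,\eta)=\rho(t,G(t,\eta))\det\partial_{\eta}G(t,\eta)$, positivity and regularity drawn from Theorems \ref{densidad-lineal} and \ref{teorema1}, and the same change-of-variables argument for integrability (your explicit use of $|G(t,\eta)-\eta|\le p$ to place $G(t,\{|\eta|>R\})$ outside the ball of radius $R-p$ is in fact tidier than the paper's phrasing). Where you genuinely diverge is in verifying the divergence inequality. The paper proceeds in Lagrangian fashion: it identifies the left-hand side with $\frac{\partial}{\partial\sigma}\{\bar{\rho}(\sigma,y(\sigma,t,\eta))\,|\partial y/\partial\eta|\}$ at $\sigma=t$ via Liouville's formula, substitutes the conjugation $G[\sigma,y(\sigma,t,\eta)]=X(\sigma,t)G(t,\eta)$ from (\ref{conj2}), and differentiates the resulting product of Jacobians. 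You proceed in Eulerian fashion: differentiating the conjugation once gives the structural link $\partial_{t}G=A(t)G-J\,g$, and the Piola divergence rule $\triangledown_{\eta}\cdot(\mathcal{J}J^{-1}(V\circ G))=\mathcal{J}(\triangledown_{x}\cdot V)\circ G$ together with Jacobi's formula assembles the pointwise master identity; I checked the grouping of $\mathcal{J}(\triangledown_{x}\rho)(t,G)\cdot\partial_{t}G+\rho(t,G)\partial_{t}\mathcal{J}$ as $\triangledown_{\eta}\cdot(\bar{\rho}J^{-1}\partial_{t}G)$ and the cancellation after substituting the structural link, and both are sound. Your route has the merit of isolating the clean multiplicative transformation of the continuity operator by $\mathcal{J}$, at the price of invoking the Piola identity, which is precisely where the $C^{2}$ regularity of $G(t,\cdot)$ enters; the paper's route uses that regularity more implicitly when splitting the derivative of the product of Jacobians. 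The only point to tighten is the joint $C^{1}$ dependence of $\mathcal{J}$ on $(t,\eta)$: the coefficients of the variational equation are only continuous in $t$, so this is better read off from identity (\ref{machine}) than from ``smoothness of the flow under a $C^{2}$ drift''; this is, however, no stronger an assumption than the paper itself makes.
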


\begin{proof}
From now on we will denote $|A|$ instead of $\det A$ by visualization reasons. We shall prove that the function $\bar{\rho}(t,x)$ satisfies the properties of Definition \ref{density}
with $g(t,x)=A(t)x+f(t,x)$. Indeed, $\bar{\rho}$ is nonnegative since $\rho$ is nonnegative
and $G$ is preserving orientation. In addition, $\bar{\rho}(t,\cdot)$ is $C^{1}$ since $G(t,\cdot)$ is $C^{2}$ for any fixed $t\geq 0$.

The rest of the proof will be decomposed in several steps:

\noindent\emph{Step 1:} $\bar{\rho}(t,x)$ is integrable outside any ball centered in the origin.

By Theorem \ref{teorema1} we know that the maps $H(t,\cdot)$ and $G(t,\cdot)$ satisfies the properties of Definition \ref{TopEq}. As $f(t,0)=0$, the  statement (i) from Lemma \ref{HPF} implies $G(t,0)=0$, this fact combined with statements (ii) and (iii) from Definition \ref{TopEq} allows to
conclude that if $B$ is an open ball centered at the origin then $G(t,B)$ is an open and bounded set containing the origin. In consequence, for any fixed $t\geq 0$, the outside of $B$ is mapped in the
outside of another ball centered at the origin and contained in $G(t,B)$.

Let $\mathcal{Z}$ be a measurable set whose closure does not contain the origin. The property stated above implies that
$G(t,\mathcal{Z})$ is outside of some ball centered at the origin. Now, by the change of variables
theorem, we can see that
$$
\displaystyle \int_{\mathcal{Z}}\bar{\rho}(t,\eta)\,d\eta
=
\int_{\mathcal{Z}}\rho(t,G(t,\eta))\Big|\frac{\partial G(t,\eta)}{\partial \eta}\Big|\,d\eta
=
\int_{G(t,\mathcal{Z})}\rho(t,y)\,dy.
$$

Finally, as $\rho(t,\cdot)$ is integrable outside any open ball centered at the origin, the same follows for $\bar{\rho}(t,\cdot)$.

\noindent\emph{Step 2:} $\bar{\rho}(t,\eta)$ verifies the following property
\begin{equation}
\label{postividad}
\frac{\partial \bar{\rho}}{\partial t}(t,\eta)+\triangledown\cdot (\bar{\rho}g)(t,\eta)>0 \quad
\textnormal{a.e. in $\mathbb{R}^{n}$}.
\end{equation}

Firstly, let $\sigma =\tau +t$ and recall that $\sigma\mapsto y(\sigma,t,\eta)$ is the solution of (\ref{nolin}) passing trough $\eta$ at time $\sigma=t$. By the Liouville's formula (see \emph{e.g.}, \cite[Corollary 3.1]{Hartman}), we know that
\begin{displaymath}
\frac{\partial}{\partial \sigma}\Big|\frac{\partial y(\tau+t,t,\eta)}{\partial \eta}\Big|\Bigg|_{\tau=0}=\triangledown \cdot g(t,\eta).
\end{displaymath}

Now, it is easy to verify that:
\begin{displaymath}
\begin{array}{rcl}
\displaystyle \frac{\partial \bar{\rho}}{\partial t}(t,\eta)+\triangledown \cdot (\bar{\rho}g)(t,\eta)&=&
\displaystyle \frac{\partial}{\partial \sigma}\Big\{\bar{\rho}(\tau+t,y(\tau+t,t,\eta))\displaystyle \Big|\frac{\partial y(\tau+t,t,\eta)}{\partial  \eta}\Big|\Big\}\Big|_{\tau=0}\\\\
&=&\displaystyle \frac{\partial}{\partial \sigma}\Big\{\rho(\tau+t,G[\tau+t,y(\tau+t,t,\eta)])\\\\\
& & \displaystyle \Big|\frac{\partial G[\tau+t,y(\tau+t,t,\eta)]}{\partial y(\tau+t,t,\eta)}\Big|\Big|\frac{\partial y(\tau+t,t,\eta)}{\partial \eta}\Big|\Big\}\Big|_{\tau=0}\\\\
&=& \displaystyle \frac{\partial}{\partial \sigma}\Big\{\rho(\tau+t,G[\tau+t,y(\tau+t,t,\eta)])\\\\
& & \displaystyle \Big|\frac{\partial G[\tau+t,y(\tau+t,t,\eta)]}{\partial \eta}\Big|\Big\}\Big|_{\tau=0}.\\\\
\end{array}
\end{displaymath}

Secondly, a consequence of (\ref{conj2}) is
\begin{displaymath}
G[\tau+t,y(\tau+t,t,\eta)]=X(\tau+t,t)G(t,\eta),
\end{displaymath}
which implies:
\begin{displaymath}
\begin{array}{rcl}
\displaystyle \frac{\partial \bar{\rho}}{\partial t}(t,\eta)+\triangledown \cdot (\bar{\rho}g)(t,\eta) &=& \displaystyle  \frac{\partial}{\partial \sigma}\Big\{\rho(\tau+t,X(\tau+t,t)G(t,\eta))
\displaystyle \Big|\frac{\partial X(\tau+t,t)G(t,\eta)}{\partial \eta}\Big|\Big\}\Big|_{\tau=0}\\\\
&=&\mathcal{A}(\tau+t,\eta)+\mathcal{B}(\tau+t,\eta)\Big|_{\tau=0},
\end{array}
\end{displaymath}
where $\mathcal{A}(\cdot,\cdot)$ and $\mathcal{B}(\cdot,\cdot)$ are respectively defined by
\begin{displaymath}
\begin{array}{rcl}
\mathcal{A}(\tau+t,\eta)&=&\displaystyle \frac{\partial }{\partial \sigma}\Big\{\rho(\tau+t,X(\tau+t,t)G(t,\eta))\Big\}\Big|\frac{\partial X(\tau+t,t)G(t,\eta)}{\partial \eta}\Big|\\\\
&=&\displaystyle \Big\{\frac{\partial \rho}{\partial \sigma}(\tau+t,X(\tau+t,t)G(t,\eta))+\\\\
& &\triangledown\rho\Big(\tau+t,X(\tau+t,t)G(t,\eta)\Big)A(\tau+t)X(\tau+t,t)G(t,\eta)\Big\}\\\\
& &\displaystyle \Big|\frac{\partial X(\tau+t,t)G(t,\eta)}{\partial \eta}\Big|
\end{array}
\end{displaymath}
and
\begin{displaymath}
\begin{array}{rcl}
\mathcal{B}(\tau+t,\eta)&=&\rho(\tau+t,X(\tau+t,t)G(t,\eta))\displaystyle \frac{\partial}{\partial \sigma}
\Big\{\Big|\frac{\partial X(\tau+t,t)G(t,\eta)}{\partial \eta}\Big|\Big\}\\\\
&=&\rho(\tau+t,X(\tau+t,t)G(t,\eta))\\\\
& &\displaystyle \frac{\partial}{\partial \sigma}\Big\{\Big|\frac{\partial X(\tau+t,t)G(t,\eta)}{\partial G(t,\eta)}\Big|\Big|\frac{\partial G(t,\eta)}{\partial \eta}\Big|\Big\}
\end{array}
\end{displaymath}

As
\begin{displaymath}
\begin{array}{rcl}
\displaystyle \mathcal{A}(t,x)&=&\displaystyle \Big\{\frac{\partial \rho}{\partial t}(t,G(t,\eta))+\triangledown\rho(t,G(t,\eta))A(t)G(t,\eta)\Big\}\Big|\frac{\partial G(t,\eta)}{\partial \eta}\Big|
\end{array}
\end{displaymath}
and
\begin{displaymath}
\begin{array}{rcl}
\mathcal{B}(t,\eta)&=&\displaystyle \rho(t,G(t,\eta))\tr A(t)G(t,\eta)\Big|\frac{\partial G(t,\eta)}{\partial \eta}\Big|,
\end{array}
\end{displaymath}
we can conclude that

\begin{displaymath}
\begin{array}{rcl}
\displaystyle \frac{\partial \bar{\rho}}{\partial t}(t,\eta)+\triangledown \cdot (\bar{\rho}g)(t,\eta)&=&\mathcal{A}(t,\eta)+\mathcal{B}(t,\eta)\\\\
&=&\displaystyle \Big\{\frac{\partial \rho}{\partial t}(t,G(t,\eta))+\triangledown \cdot \rho(t,G(t,\eta)) A(t)G(t,\eta)\Big\}\Big|\frac{\partial G(t,\eta)}{\partial \eta}\Big|,
\end{array}
\end{displaymath}
which is positive since is the product of two positive terms. The positiveness of the first one is ensured by Theorem \ref{densidad-lineal},
while the second follows from the preserving orientation property of $G(t, \cdot)$ as we seen in step 4 of the proof of the Theorem \ref{teorema1}.

\noindent\emph{Step 3:} End of proof.

The existence of a density function for the nonlinear system (\ref{nolin}) is based on the homeomorphisms $H(t,\cdot)$ and $G(t,\cdot)$ constructed in the Theorem \ref{teorema1} and the existence of the density function $\rho(t,x)$ for the linear system (\ref{lin}) from Theorem \ref{densidad-lineal}.
Additionally, in the proof of Theorem \ref{teorema1}, we show that $G$ is a $C^{2}$ preserving orientation
 diffeomorphism while the previous steps we stated that (\ref{densidad-lineal2}) is indeed a density function associated to (\ref{nolin}) and the result follows.
\end{proof}

\begin{remark}
This result follows the lines of \cite[Theorem 3]{CR1} which imposed differentiability of class $C^2$ for $G(t, \cdot)$ as a sufficient condition. Now, as we have seen in Theorem \ref{teorema1}, the restriction to the positive half line allow us to obtain the above conditions in a simpler way in terms of smoothness of  $f(t, \cdot).$
\end{remark}

\end{document}